\crefname{equation}{}{}
\Crefname{equation}{Equation}{Equations}
\crefname{theorem}{Theorem}{Theorems}
\Crefname{theorem}{Theorem}{Theorems}
\crefname{lemma}{Lemma}{Lemmas}
\Crefname{lemma}{Lemma}{Lemmas}
\crefname{proposition}{Proposition}{Propositions}
\Crefname{proposition}{Proposition}{Propositions}
\crefname{corollary}{Corollary}{Corollaries}
\Crefname{corollary}{Corollary}{Corollaries}
\crefname{conjecture}{Conjecture}{Conjectures}
\Crefname{conjecture}{Conjecture}{Conjectures}
\crefname{section}{Section}{Sections}
\Crefname{section}{Section}{Sections}
\crefname{example}{Example}{Examples}
\Crefname{example}{Example}{Examples}
\crefname{problem}{Problem}{Problems}
\Crefname{problem}{Problem}{Problems}
\crefname{table}{Table}{Tables}
\Crefname{table}{Table}{Tables}
\crefname{remark}{Remark}{Remarks}
\Crefname{remark}{Remark}{Remarks}
\crefname{definition}{Definition}{Definitions}
\Crefname{definition}{Definition}{Definitions}
\newcommand{\ZZ}{\mathbb{Z}}
\newtheorem{theorem}{Theorem}
\newtheorem{lemma}[theorem]{Lemma}
\theoremstyle{definition}
\newcommand{\arxiv}[1]{\href{http://arxiv.org/abs/#1}{\texttt{arXiv:#1}}}
\title{On the asymptotic of lottery numbers}
\author{Alexander Sidorenko\\
\small Department of Extremal Combinatorics \\
\small Alfr\'ed R\'enyi Institute of Mathematics, Hungary \\
\small\tt sidorenko.ny@gmail.com
}
\date{\today}
\begin{document}

\maketitle

\begin{abstract}
Let $L(n,k,r,p)$ denote the minimum number of $k$-subsets of an $n$-set such that 
all the $\binom{n}{p}$ $p$-subsets are intersected by one of them 
in at least $r$ elements. 
The case $p=r$ corresponds to the covering numbers, 
while the case $k=r$ corresponds to the Tur\'an numbers. 
In both cases, there exists a limit of 
$L(n,k,r,p) / \binom{n}{r}$ as $n\to\infty$. 
We prove the existence of this limit in the general case. 

\medskip
\noindent
Keywords: 
lottery problem,
coverings,
Tur\'{a}n numbers,
shadows. 

\noindent
MSC2020: 05B30, 05B40
\end{abstract}

A $k$-\emph{graph} with a vertex set $V$ is a system of $k$-element subsets of $V$ 
called \emph{edges}. 
Such a system is called
an $(n,k,r,p)$-\emph{lottery system} 
if 
$|V|=n$ and 
for every $p$-element subset $P \subseteq V$ 
there is an edge $K$ such that $|K \cap P| \geq r$. 
The minimum size of an $(n,k,r,p)$-lottery system, denoted by $L(n,k,r,p)$, 
was studied in \cite{
Bate:1998,
Bertolo:2004,
Cushing:2023,
Droesbeke:1982,
Furedi:1996,
Hanani:1964,
Li:1999,Li:2000,Li:2002,Montecalvo:2015,Sidorenko:2023}. 
The smallest presently known lottery systems with parameters 
$n \leq 90$, $r \leq k \leq 15$, $p \leq 20$ 
are listed on the website \cite{Italian:tables}. 

An $(n,k,r,r)$-lottery system is also known as an $(n,k,r)$-\emph{covering} system. 
Its minimum size is $C(n,k,r)=L(n,k,r,r)$. 
R\"odl \cite{Rodl:1985} proved that 
\begin{equation}\label{eq:Rodl}
  \lim_{n\to\infty} \frac{C(n,k,r)}{\binom{n}{r}} = \frac{1}{\binom{k}{r}} \, .
\end{equation}

We say that an $r$-graph has \emph{Tur\'an} $p$-\emph{property} 
if every $p$-element subset of vertices contains an edge. 
The \emph{Tur\'an number} $T(n,p,r)$ is the minimum 
number of edges in an $n$-vertex $r$-graph with this property. 
Obviously, $T(n,p,r) = L(n,r,r,p)$. 
A survey on the Tur\'an numbers can be found in \cite{Sidorenko:1995}. 
It is known that there exists a limit
\begin{equation*}
  t(p,r) = \lim_{n\to\infty} \frac{T(n,p,r)}{\binom{n}{r}} \, .
\end{equation*}

It is natural to ask whether a limit of 
$L(n,k,r,p) / \binom{n}{r}$ 
exists for any fixed $k,r,p$ as $n\to\infty$. 
In the present note, we provide an affirmative answer to this question. 

The $r$-\emph{shadow} of a $k$-graph $H$ is an $r$-graph with the same vertex-set 
whose edges are the $r$-element subsets 
contained in the edges of $H$. 
Notice that an $n$-vertex $k$-graph is an $(n,k,r,p)$-lottery system 
if and only if its $r$-shadow has the Tur\'an $p$-property. 
Since a $k$-element subset contains $\binom{k}{r}$ $r$-element subsets, 
we have a simple bound
\begin{equation}\label{eq:Turan}
  L(n,k,r,p) \:\geq\: T(n,p,r) / {\textstyle \binom{k}{r}} \, .
\end{equation}

Let ${\cal F}$ be a family of forbidden $r$-graphs. 
We call an $r$-graph ${\cal F}$-\emph{free} if it does not
contain subgraphs isomorphic to members of ${\cal F}$. 
Let $K_p^r$ denote the complete $r$-graph on $p$ vertices. 
It is easy to see that an $r$-graph has the Tur\'an $p$-property 
if and only if its complement is $\{K_p^r\}$-free. 
Denote by $L(n,k,r,{\cal F})$ the minimum number of edges in an $n$-vertex $k$-graph 
such that the complement of its $r$-shadow is ${\cal F}$-free. 
Then $L(n,k,r,p) = L(n,k,r,\{K_p^r\})$. 

We call an $r$-graph \emph{pair-covering} 
if any pair of its vertices is contained in an edge. 
In other words, an $r$-graph is pair-covering if its $2$-shadow is a complete graph. 
For instance, $K_p^r$ is pair-covering for any $p \geq r$. 

\medskip

Our main result is 

\begin{theorem}\label{th:limit}
If every member of ${\cal F}$ is a pair-covering $r$-graph, then there exists 
$\lim_{n\to\infty} L(n,k,r,{\cal F}) \, n^{-r}$.
\end{theorem}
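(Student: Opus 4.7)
My plan is to identify the limit as $t(\mathcal{F},r)/\binom{k}{r}$, where $t(\mathcal{F},r):=\lim_n T(n,\mathcal{F},r)/\binom{n}{r}$ is the Tur\'an density of $\mathcal{F}$. The Tur\'an density exists for any family $\mathcal{F}$: any induced sub-$r$-graph of an $\mathcal{F}$-Tur\'an $r$-graph is still $\mathcal{F}$-Tur\'an, so averaging over induced $m$-subsets of an extremal example (as in Katona--Nemetz--Simonovits) yields monotonicity of $T(n,\mathcal{F},r)/\binom{n}{r}$ and hence convergence. Combined with \eqref{eq:Turan}, this gives $\liminf L(n,k,r,\mathcal{F})/\binom{n}{r}\geq t(\mathcal{F},r)/\binom{k}{r}$; pair-covering is not needed here.

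For the matching upper bound I construct, for each $\varepsilon>0$ and each $n=mb$ with $m$ and $b$ large, a lottery system of size $(t(\mathcal{F},r)+\varepsilon+O(1/m))\binom{n}{r}/\binom{k}{r}$. Fix $m$ with $T(m,\mathcal{F},r)\leq(t(\mathcal{F},r)+\varepsilon)\binom{m}{r}$, let $G_0$ be an extremal $\mathcal{F}$-Tur\'an $r$-graph on $[m]$, blow up to $V=V_1\sqcup\cdots\sqcup V_m$ with $|V_i|=b$, and define the $r$-graph $G$ on $V$ to contain (a)~every transversal $r$-subset whose block-image lies in $G_0$, and (b)~every non-transversal $r$-subset (one with two vertices in a common block). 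The pair-covering hypothesis is precisely what makes $G$ still $\mathcal{F}$-Tur\'an on $V$: any copy of $F\in\mathcal{F}$ in $\overline{G}$ has all edges transversal (the non-transversal ones sit in $G$), so two of its vertices in the same block would, by pair-covering, force an $F$-edge with two vertices in that block---a non-transversal $r$-subset, contradicting its being in $\overline{G}$; thus all $F$-vertices lie in distinct blocks and the copy descends to a copy of $F$ in $\overline{G_0}$, contradicting $\mathcal{F}$-freeness of the latter. A short calculation gives $|G|=(t(\mathcal{F},r)+\varepsilon+O(1/m))\binom{n}{r}$.

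It remains to realize $G$ as the $r$-shadow of a $k$-graph of size $(1+o(1))|G|/\binom{k}{r}$. Every non-transversal $r$-subset lies in some $\bigcup_{i\in S}V_i$ with $|S|\leq r-1$; covering each such union by \eqref{eq:Rodl} contributes only $O(m^{r-1}b^r/\binom{k}{r})=o(n^r)$ $k$-subsets. The transversal part of $G$ has large codegrees ($\Omega(n^{r-2})$ across distinct-block pairs, and $\binom{n-2}{r-2}$ within blocks), and an abundance of ``good'' $k$-subsets (those within a single block, or whose block-image is a $K_k^r$-clique in $G_0$) permits a R\"odl-nibble (Pippenger--Spencer-style) covering of it with $(1+o(1))|G_0|b^r/\binom{k}{r}$ $k$-subsets. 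Letting $b\to\infty$, then $m\to\infty$, then $\varepsilon\to0$ yields $\limsup L(n)/\binom{n}{r}\leq t(\mathcal{F},r)/\binom{k}{r}$, completing the proof. The main technical obstacle is the nibble step: one must verify codegree-regularity for the transversal part of $G$ and that the supply of ``good'' $k$-subsets is plentiful enough; the blow-up structure is designed precisely to make these conditions tractable, while pair-covering enters only through the $\mathcal{F}$-Tur\'an verification in the middle paragraph.
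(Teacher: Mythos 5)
There is a genuine gap, and it sits exactly where you park it at the end: the nibble step is not a technical verification that can be deferred, it is the whole difficulty, and it does not go through for an arbitrary family $\mathcal{F}$. Your upper bound needs total cost $\approx t(\mathcal{F},r)\binom{n}{r}/\binom{k}{r}$, so on average each $k$-set you use must cover $\approx\binom{k}{r}$ edges of $G$, with multiplicity tending to $1$. Since the non-transversal edges are only an $O(1/m)$ fraction of $G$, this forces almost all of your $k$-sets to cover $\binom{k}{r}$ \emph{transversal} edges, i.e.\ to have $k$ vertices in $k$ distinct blocks whose block-image is a $K_k^r$-clique of $G_0$ (any $k$-set meeting a block twice, or whose block-image is only a $K_{k'}^r$ with $k'<k$, covers strictly fewer transversal edges, and the loss is a constant factor). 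Hence your scheme implicitly requires that a near-extremal Tur\'an graph $G_0$ for $\mathcal{F}$ be nearly (fractionally) $K_k^r$-decomposable -- in particular that almost every edge of $G_0$ lie in many $K_k^r$-cliques of $G_0$. Nothing guarantees this: $G_0$ is an unknown extremal object, blowing up creates no cross-block cliques that were not already in $G_0$, and no codegree-regularity for the clique hypergraph can be extracted from extremality alone. So the Pippenger--Spencer input you invoke simply has no verified hypotheses, and the upper bound collapses.

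Worse, the statement you are trying to prove is stronger than the theorem and is not known to be true. You identify the limit as $t(\mathcal{F},r)/\binom{k}{r}$; but the paper's own discussion shows that already for $k=4$, $r=3$, $\mathcal{F}=\{K_p^3\}$ the value of the limit is only conjectural, and the equality $l(4,3,p)=t(p,3)/\binom{4}{3}$ is tied to Tur\'an's conjecture -- only the inequality \eqref{eq:Turan} (your liminf bound, which is fine and indeed does not need pair-covering) is available in general. The paper sidesteps all of this by never identifying the limit: it sets $\gamma=\liminf_n L(n,k,r,\mathcal{F})n^{-r}$, takes a near-optimal \emph{lottery} $k$-graph $H$ on $m$ vertices (so the efficient grouping of $r$-sets into $k$-sets is already given by $H$ itself), blows up each vertex $N$ times, and replaces each edge $e$ of $H$ by the exact algebraic $k$-partite design of \cref{th:GDD} (Vandermonde matrices over $\ZZ_N$), whose $N^r$ edges cover every transversal $r$-set over $e$ exactly once -- perfect efficiency with no nibble and no structural assumption on Tur\'an-extremal graphs. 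The non-transversal $r$-sets are absorbed by covering designs as an $O(1/m)$ error, and \cref{th:blowup} (the only place pair-covering is used, the same way you use it) keeps the complement of the shadow $\mathcal{F}$-free. If you want to salvage your route, you would have to prove that $\mathcal{F}$ admits asymptotically extremal Tur\'an graphs that are nearly $K_k^r$-decomposable, which is an open structural problem in general; blowing up an extremal lottery system instead of an extremal Tur\'an graph is precisely the move that avoids it.
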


It follows from \cref{th:limit} that for any $k,r,p$, there exists the limit 
\begin{align*}
  l(k,r,p) \: = \: \lim_{n\to\infty} L(n,k,r,p) / {\textstyle \binom{n}{r}} \, .
\end{align*}
The value of the limit is known only for $r=2$ (see \cite{Furedi:1996}): 
$l(k,2,p) = \frac{1}{p-1} \binom{k}{2}^{-1}$. 
It is highly likely that $l(4,3,p) = \frac{1}{(p-1)^2}$. 
The upper bound was proved in \cite{Sidorenko:2023} for $p=4$ 
and can easily be generalized for any $p$. 
The lower bound would follow from 
\cref{eq:Turan} and the famous 
Tur\'an's conjecture that asserts $t(p,3)=\frac{4}{(p-1)^2}$. 

\medskip

In order to prove \cref{th:limit}, we need a few auxiliary results. 

\medskip

An $N$-\emph{blow-up} of an $r$-graph $G$ is obtained 
by replacing each vertex of $G$ with $N$ clones, 
and replacing each edge of $G$ with $N^r$ edges 
that correspond to $N^r$ possible ways of replacing each original vertex 
with one of its $N$ clones. 

\begin{lemma}\label{th:blowup}
If every member of ${\cal F}$ is a pair-covering $r$-graph, 
and $G$ is ${\cal F}$-free, 
then blow-ups of $G$ are also ${\cal F}$-free. 
\end{lemma}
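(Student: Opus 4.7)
\medskip

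\noindent
\textbf{Proof plan.} The plan is to argue by contradiction: if a blow-up of $G$ contained a member of $\mathcal{F}$, then using the pair-covering property I would show that the embedding projects to an embedding into $G$ itself, contradicting the hypothesis that $G$ is $\mathcal{F}$-free.

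First I would record the key structural property of the $N$-blow-up $G^{(N)}$: by construction, every edge of $G^{(N)}$ contains exactly one clone from each of the $r$ original vertices forming an edge of $G$. Consequently, if two distinct vertices $x,y$ of $G^{(N)}$ are clones of the same original vertex of $G$, then no edge of $G^{(N)}$ contains both $x$ and $y$; equivalently, the pair $\{x,y\}$ is missing from the $2$-shadow of $G^{(N)}$. Let $\pi\colon V(G^{(N)})\to V(G)$ denote the projection sending each clone to its original.

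Next, suppose for contradiction that some $F\in\mathcal{F}$ embeds into $G^{(N)}$ via an injective map $\phi\colon V(F)\to V(G^{(N)})$ sending edges to edges. I claim $\pi\circ\phi$ is injective. Indeed, if $\pi(\phi(u))=\pi(\phi(v))$ for distinct $u,v\in V(F)$, then $\phi(u)$ and $\phi(v)$ are distinct clones of the same vertex of $G$, so $\{\phi(u),\phi(v)\}$ lies in no edge of $G^{(N)}$. But $F$ is pair-covering, so some edge $e$ of $F$ contains $\{u,v\}$; its image $\phi(e)$ would then be an edge of $G^{(N)}$ containing $\{\phi(u),\phi(v)\}$, a contradiction.

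Finally, since $\pi\circ\phi$ is an injection $V(F)\to V(G)$, and since the image under $\pi$ of any edge of $G^{(N)}$ is by construction an edge of $G$, the map $\pi\circ\phi$ realizes $F$ as a subgraph of $G$, contradicting the assumption that $G$ is $\mathcal{F}$-free. I expect no serious obstacle here; the only subtlety is verifying that the pair-covering hypothesis is used precisely where it is needed, namely to rule out the collision of two $F$-vertices in a single fiber of $\pi$.
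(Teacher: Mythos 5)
Your argument is correct and is essentially the paper's own proof: pair-covering rules out two clones of one vertex appearing in the embedded copy of $F$, so projecting clones back to their originals yields a copy of $F$ in $G$, a contradiction. The only difference is that you formalize the projection $\pi\circ\phi$ explicitly, which the paper leaves implicit.
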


\begin{proof}[\bf{Proof}]
Suppose to the contrary that 
a blow-up of $G$ contains a subgraph isomorphic to $F\in{\cal F}$. 
This subgraph can not contain two clones of the same vertex of $G$ since 
any two vertices of $F$ are contained in some edge of $F$, 
while
no edge in the blow-up contains two clones of the same vertex. 
Then $G$ must also contain a copy of $F$, 
which contradicts the assumption that $G$ is ${\cal F}$-free. 
\end{proof}

A \emph{Vandermonde matrix} is a square matrix $[a_{ij}]$ 
where $a_{ij} = (x_j)^i$, \;($i,j=0,1,\ldots,n-1$). 
It is well known that its determinant is equal to 
\begin{align*}
      D(x_0,x_1,\ldots,x_{n-1}) = \prod_{0 \leq i < j< n} (x_j - x_i) \, . 
\end{align*}
Notice that $D(x_0,x_1,\ldots,x_{n-1}) > 0$ 
when $x_0 < x_1 < \ldots < x_{n-1}$. 
Let $M_{k,n}$ denote the least common multiple of determinants 
$D(x_0,x_1,\ldots,x_{n-1})$ 
over all choices of integer values 
$0 \leq x_0 < x_1 < \ldots < x_{n-1} < k$. 

\medskip

We will consider matrices over the ring of integers modulo $N$ 
which, for simplicity, will be denoted as $\ZZ_N$. 
It is easy to see that if $N = aM_{k,n}+1$ with $a \geq 1$, 
and $x_0,x_1,\ldots,x_{n-1}$ are pairwise distinct integers from $\{0,1,\ldots,k-1\}$, 
then the Vandermonde matrix with entries $a_{ij}=(x_j)^i$ 
is invertible over $\ZZ_N$. 

\begin{lemma}\label{th:GDD}
Let $N = aM_{k,n}+1$ with $a \geq 1$. 
There exists a $k$-partite $k$-graph $G_{N,k,r}$ 
with $N$ vertices in each part
such that 
for any $r$ vertices from $r$ distinct parts 
there is exactly one edge that contains them. 
\end{lemma}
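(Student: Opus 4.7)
My plan is to construct $G_{N,k,r}$ by polynomial interpolation over the ring $\ZZ_N$. I will identify each of the $k$ parts with $\ZZ_N$, and label the parts themselves by the integers $0,1,\ldots,k-1$. For every polynomial $f(t) = c_0 + c_1 t + \cdots + c_{r-1} t^{r-1}$ with coefficients $c_i \in \ZZ_N$, declare an edge whose vertex in part $j$ is $f(j) \in \ZZ_N$; this yields $N^r$ candidate edges, and the desired transversal property will reduce to unique Lagrange interpolation modulo $N$.

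To verify it, fix any $r$ distinct parts $j_0 < j_1 < \cdots < j_{r-1}$ from $\{0,1,\ldots,k-1\}$ and arbitrary targets $y_0, y_1, \ldots, y_{r-1} \in \ZZ_N$. An edge contains this transversal if and only if its defining polynomial satisfies $f(j_s)=y_s$ for $s=0,1,\ldots,r-1$; equivalently, the coefficient vector $(c_0,\ldots,c_{r-1})$ solves
\begin{align*}
\sum_{t=0}^{r-1} c_t \, (j_s)^t \:=\: y_s \qquad (s=0,1,\ldots,r-1),
\end{align*}
a linear system whose matrix is the Vandermonde matrix with entries $(j_s)^t$ and determinant $D(j_0,\ldots,j_{r-1})$. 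Exactly one such edge exists precisely when this matrix is invertible over $\ZZ_N$.

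Invertibility of the Vandermonde matrix modulo $N$ is the main step, and is where I expect the bookkeeping to concentrate. By the definition of $M_{k,r}$, the determinant $D(j_0,\ldots,j_{r-1})$ divides $M_{k,r}$; moreover $M_{k,r}$ divides $M_{k,n}$ whenever $n \geq r$, because any $r$-subset of $\{0,1,\ldots,k-1\}$ extends to an $n$-subset and the shorter Vandermonde determinant appears as a factor in the longer one. Hence $D(j_0,\ldots,j_{r-1})$ is coprime to $N = aM_{k,n}+1$, and invertibility then follows as in the remark preceding the lemma. The only delicate point is that $\ZZ_N$ need not be a field in general, which is exactly why the congruence $N \equiv 1 \pmod{M_{k,n}}$ is imposed --- to guarantee coprimality with every $r \times r$ Vandermonde determinant built from $\{0,1,\ldots,k-1\}$. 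Beyond this I do not anticipate further obstacles; the construction is a classical transversal / orthogonal-array-style design recast to work over a ring rather than a field.
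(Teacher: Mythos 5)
Your construction is the ``generator matrix'' (Lagrange interpolation) version of the design: edges are graphs of polynomials of degree at most $r-1$, so the transversal property hinges on the invertibility over $\ZZ_N$ of $r\times r$ Vandermonde matrices built from points of $\{0,\dots,k-1\}$, i.e.\ on $N$ being coprime to $M_{k,r}$. The paper instead takes the ``parity check'' view: edges are the solutions of ${\bf A}z=0$ for the $(k-r)\times k$ matrix ${\bf A}=[j^i]$, $0\le i\le k-r-1$, and fixing any $r$ coordinates leaves a $(k-r)\times(k-r)$ Vandermonde system; so only coprimality of $N$ with $M_{k,k-r}$ is needed. This difference matters, because in this paper the parameter $n$ in the hypothesis $N=aM_{k,n}+1$ is $n=\Delta=k-r$: the lemma is invoked in the proof of Theorem~1 with $N\equiv 1 \bmod M_{k,\Delta}$. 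Your reduction ``$M_{k,r}$ divides $M_{k,n}$ whenever $n\ge r$'' is correct as far as it goes, but you never check that $n\ge r$, and in the intended application $n=k-r$ is smaller than $r$ whenever $k<2r$, which is the typical lottery regime. So the coprimality you need is not supplied by the hypothesis.

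Concretely, take $k=4$, $r=3$, so $n=k-r=1$ and $M_{4,1}=1$; then $N=2$ satisfies $N=aM_{k,n}+1$. In your construction the evaluation points $0,1,2,3$ collapse modulo $2$ (parts $0$ and $2$ get the same point), $D(0,1,2)=2$ is a zero divisor in $\ZZ_2$, and for suitable targets the interpolation system has no solution or several, so ``exactly one edge'' fails. The paper's construction for these parameters is simply $z_0+z_1+z_2+z_3=0$ in $\ZZ_N$, which works for every $N$. Your argument would be a correct (and essentially equivalent, dual) proof if the modulus in the lemma were $M_{k,r}$, or more safely $M_{k,k}$, but as the lemma is stated and used --- with modulus $M_{k,k-r}$ --- the invertibility step has a genuine gap for $k<2r$; to repair it along the paper's lines you should pass to the codimension-$(k-r)$ description rather than the dimension-$r$ one.
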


\begin{proof}[\bf{Proof}]
Consider a $(k-r) \times k$ matrix ${\bf A}=[j^i]$ 
where $i=0,1,\ldots,k-r-1$, $j=0,1,\ldots,k-1$. 
Each of its $(k-r) \times (k-r)$ submatrices is a Vandermonde matrix 
invertible over $\ZZ_N$. 
Let the vertex-set of $G_{N,k,r}$ consist of pairs $(j,z)$ where 
$j\in\{0,1,\ldots,k-1\}$ and $z\in\ZZ_N$.
Vertices $(0,z_0),(1,z_1),\ldots,(k-1,z_{k-1})$ form an edge if and only if 
\begin{align}\label{eq:lemma}
  {\bf A} \cdot (z_0,z_1,\ldots,z_{k-1})^T = {\bf 0} 
  \;\;\;{\rm over}\;\: \ZZ_N \, . 
\end{align}
If we fix values of any $r$ variables among $z_0,z_1,\ldots,z_{k-1}$, 
then there is exactly one solution of \cref{eq:lemma} 
for the remaining $k-r$ variables. 
\end{proof}

\begin{proof}[\bf{Proof of \cref{th:limit}}].
Set $\Delta := k-r$.
It follows from \cref{eq:Rodl} that there exist $c_{k,r}$ and  $N_*$ such that 
$C(n,k',k'-\Delta) \leq c_{k,r} n^{k'-\Delta}$
for every $n \geq N_*$ and every $k' \in [\Delta+2, k]$, 
Set $\gamma := \liminf_{n\to\infty} L(n,k,r,{\cal F}) \, n^{-r}$. 
We are going to prove that for every $\varepsilon > 0$, there is $n_{\varepsilon}$ such that 
$L(n,k,r,{\cal F}) \, n^{-r} \leq \gamma + \varepsilon$ 
for all $n \geq n_{\varepsilon}$. 

Select $m \geq \frac{4}{\varepsilon} k c_{k,r} \frac{(r-2)^{r-2}}{(r-2)!}$ 
such that 
$L(m,k,r,{\cal F}) \, m^{-r} \leq \gamma + \frac{\varepsilon}{4}$. 
Select $N_{\varepsilon} \geq N_*$ such that 
$(1+M_{k,\Delta}/N_{\varepsilon})^r \leq
\frac{\gamma+\varepsilon}{\gamma+\frac{\varepsilon}{2}}$. 
Set $n_{\varepsilon} := mN_{\varepsilon}$. 
Let $n \geq n_{\varepsilon}$. 
There exists $N \equiv 1 \bmod M_{k,\Delta}$ such that $n \leq mN < n+mM_{k,\Delta}$. 
Note that $N \geq N_{\varepsilon}$. 

Consider a $k$-graph $H$ with $m$ vertices and $L(m,k,r,{\cal F})$ edges 
such that the complement of its $r$-shadow is ${\cal F}$-free. 
Let $V$ be its vertex-set. 
Set $V_N := V\times\ZZ_N$. 
Let $e=\{v_1,\ldots,v_k\}$ be an edge of $H$. 
By \cref{th:GDD}, we can construct a $k$-graph $G_e$ with the vertex-set 
$\{(v_i,z):\: i=1,\ldots,k,\: z\in\ZZ_N\}$ and $N^r$ edges such that 
for any $1 \leq i_1 < i_2 < \ldots < i_r \leq k$ and any $z_1,z_2,\ldots,z_r\in\ZZ_N$, 
there is exactly one edge in $G_e$ that contains vertices 
$(v_{i_1},z_1), (v_{i_2},z_2),\ldots,(v_{i_r},z_r)$. 
Let ${\cal A}$ denote the union of the edge-sets of $k$-graphs $G_e$ 
over all edges $e$ in $H$. 
Then 
\begin{align*}
  |{\cal A}| \: = \: N^r L(m,k,r,{\cal F}) 
  \:\leq\: \left(\gamma + \frac{\varepsilon}{4}\right) (mN)^r \, .
\end{align*} 

Let $\pi$ denote the natural projection of $V_N$ on $V$, 
so $\pi((v,z)) = v$. 
For a vertex $v$ of $H$, denote $X_v =\{(v,z): z\in\ZZ_N\}$. 
For each $k' \in [\Delta+2,k]$, 
construct a $k'$-graph $C_{v,k'}$, 
which is an $(N,k',k'-\Delta)$-covering system on $X_v$ of size $C(N,k',k'-\Delta)$. 
Let ${\cal B}_{v,k'}$ denote the set of all $k$-element subsets $A \subset V_N$ 
such that $A \cap X_v$ is an edge of $C_{v,k'}$ and $|\pi(A \backslash X_v)| \leq r-2$. 
Then 
\begin{align*}
  |{\cal B}_{v,k'}| & \:\leq\: C(N,k',k'-\Delta) \binom{m-1}{r-2} \binom{(r-2)N}{k-k'}
  \\ & \:\leq\: c_{k,r}\, N^{k'-\Delta}\, \frac{m^{r-2}}{(r-2)!}\, (r-2)^{r-2}\, N^{k-k'}
  \\ & \: = \: c_{k,r}\, \frac{(r-2)^{r-2}}{(r-2)!}\, m^{r-2}\, N^r \, .
\end{align*}
Let ${\cal B}$ denote the union of ${\cal B}_{v,k'}$ over 
all $k' = \Delta+2,\ldots,k$ 
and 
all vertices $v$ of $H$. 
Then 
\begin{align*}
  |{\cal B}| & \:\leq\: m \cdot k \cdot c_{k,r}\, \frac{(r-2)^{r-2}}{(r-2)!}\, m^{r-2}\, N^r
  \\ & \:\leq\: k\, c_{k,r}\, \frac{(r-2)^{r-2}}{(r-2)!}\, m^{r-1}\, N^r
  \\ & \:\leq\: \frac{\varepsilon}{4}\, (mN)^r \, .
\end{align*}
We claim that every $r$-element subset $B \subseteq V_N$ with $|\pi(B)| \leq r-1$ 
is contained in one of the members of ${\cal B}$. 
Indeed, since $|B| > |\pi(B)|$, there is vertex $v$ such that $t=|B \cap X_v| \geq 2$. 
Then $B$ is contained in one of the members of ${\cal B}_{v,t+\Delta}$. 

Let $H_N$ be a $k$-graph with the vertex-set $V_N$ 
and the edge-set ${\cal A} \cup {\cal B}$. 
Then the complement of the $r$-shadow of $H_N$ is contained 
in the $N$-blow-up of the complement of the $r$-shadow of $H$. 
Therefore, by \cref{th:blowup}, 
the complement of the $r$-shadow of $H_N$ is ${\cal F}$-free, 
and 
$L(mN,k,r,{\cal F} \leq |{\cal A}| + |{\cal B}| \leq 
\left(\gamma + \frac{\varepsilon}{2}\right) (mN)^r$. 
As $n \leq mN \leq n+mM_{k,\Delta}$ and $N \geq N_{\varepsilon}$, we get 
\begin{align*}
  (mN/n)^r \:\leq\: ((n+mM_{k,\Delta})/n)^r 
  \:\leq\: (1+M_{k,\Delta}/N_{\varepsilon})^r 
  \:\leq\: \frac{\gamma+\varepsilon}{\gamma+\frac{\varepsilon}{2}}
\end{align*}
and 
\begin{align*}
  L(n,k,r,{\cal F}) \leq L(mN,k,r,{\cal F}) 
  \:\leq\: \left(\gamma + \frac{\varepsilon}{2}\right) (mN)^r 
  \:\leq\: (\gamma + \varepsilon) n^r \, .
\end{align*}

\end{proof}

\end{document}